\def\ep{\varepsilon}
\def\G{\Gamma}
\def\cut{\setminus}
\def\C{\mathcal C}
\def\re{\mbox{Re}}
\numberwithin{equation}{section}
\newtheorem{thm}{Theorem}[section]
\begin{document}

\title{Plancherel-Rotach Asymptotics of Second-Order Difference Equations with Linear Coefficients}
\author{Xiang-Sheng Wang\footnote{Email: xswang@semo.edu}\\
Department of Mathematics\\Southeast Missouri State University\\Cape Girardeau, Missouri 63701}

\maketitle

\begin{abstract}
In this paper, we provide a complete Plancherel-Rotach asymptotic analysis of polynomials that satisfy a second-order difference equation with linear coefficients. According to the signs of the parameters, we classify the difference equations into six cases and derive explicit asymptotic formulas of the polynomials in the outer and oscillatory regions, respectively. It is remarkable that the zero distributions of the polynomials may locate on the imaginary line or even on a sideways Y-shape curve in some cases.
\end{abstract}

{\bf Keywords:}
Difference equations; polynomials; Plancherel-Rotach Asymptotics; zero distributions.

{\bf AMS Subject Classification:} 39A06; 41A60

\section{Introduction}
All of the classical hypergeometric (monic) orthogonal polynomials $\pi_n(x)$ within Askey scheme \cite{KS98} satisfy the following second-order linear difference equation
\begin{align}
  \pi_{n+1}(x)=(x-A_n)\pi_n(x)-B_n\pi_{n-1}(x),~~\pi_0(x)=1,~\pi_1(x)=x-A_0,
\end{align}
where the coefficients $A_n$ and $B_n$ are polynomials or rational functions of $n$.
For instance, the Charlier polynomials correspond to $A_n=n+a$ and $B_n=na$; the Hermite polynomials correspond to $A_n=0$ and $B_n=n/2$; and the Chebyshev polynomials correspond to $A_n=0$ and $B_n=1/4$.
In this paper, we will provide a complete Plancherel-Rotach asymptotic analysis of second-order difference equations with linear coefficients, namely, $A_n$ and $B_n$ are linear functions of $n$. Upon a shift on $x$, we may assume $A_n=dn$ and $B_n=an+b$.

There are plenty of methods developed for asymptotic analysis of orthogonal polynomials: if the polynomials can be expressed in terms of an integral, one may adopt the classical Laplace's method and steepest-descent method \cite{Wo89}; if the polynomials satisfy a second-order linear differential equation, the well-known WKB method \cite{Ol97} can be applied; if the polynomials have an explicit orthogonal weight with certain nice properties, we may use the Riemann-Hilbert approach and Deift-Zhou nonlinear steepest-descent method \cite{BKMM07,DKMVZ99,DZ93}. However, few studies in the previous literature were considering asymptotic analysis of polynomials via difference equations due to the loss of continuity. Van Assche and Geronimo \cite{VAG89} did some pioneer works in this field and obtained asymptotic formulas in the outer region, where trapezoidal rule was used to build a bridge from discreteness to continuity. Wong and Li \cite{WL92} derived two linearly independent solutions in the oscillatory region, while determining the coefficients of the linear combination of the two solutions with given initial values was left as an open problem. In a series of work \cite{Wa01,WW02,WW03,WW05}, Wang and Wong established a beautiful lemma on Airy functions to derive uniform asymptotic formulas near the turning points. It is noted that their results were based on the assumption that the asymptotic formulas in the oscillatory region were given. Recently, Wang and Wong \cite{WW12} completed this framework by introducing a matching method to determine the coefficients of linear combination of Wong-Li solutions in the oscillatory region from Van Assche-Geronimo solutions in the outer region. Therefore, a systematic method of asymptotic analysis on difference equations was formulated. This method was successively applied in the study of several indeterminate moment problems \cite{CL14,DIW13} where only difference equations were known and thus the classical Laplace's method, steepest descent method, WKB method, Riemann-Hilbert approach and Deift-Zhou nonlinear steepest-descent method seem to be unapplicable.

To further develop the difference equation technique, we study a general second-order linear difference equation with linear coefficients. We are interested in the Plancherel-Rotach asymptotic formulas of solutions in the outer region and oscillatory region. According to the signs of the parameters $d$ and $a$, we classify the equations into six cases: I.A) $d>0$ and $a>0$; I.B) $d>0$ and $a<0$; I.C) $d>0$ and $a=0$; II.A) $d=0$ and $a>0$; II.B) $d=0$ and $a<0$; II.C) $d=0$ and $a=0$. The cases with $d<0$ can be transformed to the cases with $d>0$ by a simple reflection. Note that the classical orthogonal polynomials (Charlier, Hermite and Chebyshev, for instance) always have nonnegative $a\ge0$ and their zeros are always real. However, if we choose $a<0$, as we shall see later, the zero distributions of the polynomials $\pi_n(x)$ may lie on the imaginary line (subcase II.B) or even on a sideways Y-shape curve (subcase I.B).

The rest of this paper is organized as follows. In Section 2, we focus on the case $d\neq0$ and divide this case into three subcases according to the sign of $a$. In Section 3, we investigate the special case $d=0$ and again consider three subcases $a>0$, $a<0$ and $a=0$ in three subsections, respectively.

\section{Case I: $d\neq0$}
Upon a transformation $x\to-x$ and $\pi_n\to(-1)^n\pi_n$, we may assume without loss of generality that $d>0$.
In the following three subsections, we shall consider three subcases $a>0$, $a<0$ and $a=0$, respectively.
\subsection{Subcase I.A: $a>0$}
We first state our theorem.
\begin{thm}\label{IA}
  Assume $d>0$ and $a>0$. Let $x=ny$ and $y=d+z/\sqrt n$. As $n\to\infty$, for $z\in\C\cut[-\sqrt n d,2\sqrt a]$, we have
\begin{align}\label{IA-1}
  \pi_n(nd+\sqrt nz)\sim&(n/e)^n({z+\sqrt{z^2-4a}\over2\sqrt n})^n({z+\sqrt{z^2-4a}\over2(\sqrt n d+z)})^{-a/d^2-\sqrt n(\sqrt n d+z)/d}
  \nonumber\\&~~\times({\sqrt n d+z\over\sqrt{z^2-4a}})^{1/2}\times\exp[{2a-z^2-4\sqrt n dz+(z+4\sqrt n d)\sqrt{z^2-4a}\over4d^2}];
\end{align}
and for $z$ in a neighborhood of $(-2\sqrt a,2\sqrt a)$, we have
\begin{align}\label{IA-2}
  \pi_n(nd+\sqrt nz)\sim&(n/e)^n({\sqrt{a}\over\sqrt n})^{-a/d^2-\sqrt n z/d}
  (d+z/\sqrt n)^{a/d^2+\sqrt n (\sqrt n d+z)/d}({\sqrt n d+z\over\sqrt{4a-z^2}})^{1/2}\exp[{2a-z^2-4\sqrt n dz\over4d^2}]
  \nonumber\\&~~\times  2\cos[(n-{a\over d^2}-{\sqrt n (\sqrt n d+z)\over d})\arccos{z\over2\sqrt a}-{\pi\over4}+{(z+4\sqrt n d)\sqrt{4a-z^2}\over4d^2}];
\end{align}
and for $z$ in a neighborhood of $(-\sqrt n d,-2\sqrt a)$, we have
\begin{align}\label{IA-3}
  \pi_n(nd+\sqrt nz)\sim&(n/e)^n({-z+\sqrt{-z-2\sqrt a}\sqrt{-z+2\sqrt a}\over2\sqrt n})^{-a/d^2-\sqrt n z/d}
  \nonumber\\&~~\times(d+z/\sqrt n)^{a/d^2+\sqrt n (\sqrt n d+z)/d}({\sqrt n d+z\over\sqrt{-z-2\sqrt a}\sqrt{-z+2\sqrt a}})^{1/2}
  \nonumber\\&~~\times\exp[{2a-z^2-4\sqrt n dz-(z+4\sqrt n d)\sqrt{-z-2\sqrt a}\sqrt{-z+2\sqrt a}\over4d^2}]
  \nonumber\\&~~\times2\cos[\pi(-a/d^2-\sqrt n z/d-1/2)].
\end{align}
\end{thm}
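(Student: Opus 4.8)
The plan is to run the difference-equation program of Van Assche--Geronimo \cite{VAG89}, Wong--Li \cite{WL92} and Wang--Wong \cite{WW12}, supplemented by the Airy lemma of \cite{Wa01,WW02}, on the recurrence with $A_m=dm$ and $B_m=am+b$. (Rescaling via $\pi_n(x)=d^n\widetilde\pi_n(x/d)$ turns $B_m$ into $(a/d^2)m+b/d^2$, so one may as well take $d=1$ --- which already explains the ubiquitous $a/d^2$ --- or else carry $d$ through unchanged.) Fix a large $X$, with $X=nd+\sqrt nz$ in mind so that $X-dm$ becomes $\sqrt nz$ at the target index $m=n$, and let the recurrence index $m$ run. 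The substitution $t_m=\pi_m(X)/\pi_{m-1}(X)$ gives the Riccati recursion $t_{m+1}=(X-dm)-(am+b)/t_m$, whose frozen-coefficient balance is $t^2-(X-dm)t+(am+b)=0$ with roots $t_\pm(m)=\tfrac12\bigl[(X-dm)\pm\sqrt{(X-dm)^2-4(am+b)}\bigr]$. Feeding in a correction $t_m=t_\pm(m)(1+O(1/m))$ and solving order by order produces two formal solutions $\Psi_m^\pm$ of the three-term recurrence with $\Psi_m^\pm\asymp\bigl((X-dm)^2-4(am+b)\bigr)^{-1/4}\prod_{k\le m}t_\pm(k)$; the discriminant factor is precisely the source of the square-root amplitude $\bigl((\sqrt nd+z)/\sqrt{z^2-4a}\bigr)^{1/2}$ in \eqref{IA-1}.

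Next I would convert the products into the closed elementary expressions displayed in the theorem. Since $\log\Psi_n^\pm=\sum_{m\le n}\log t_\pm(m)$ plus the logarithm of the amplitude, and $t_\pm(m)$ is a smooth function of $m$ away from the turning index $m^*$ where the discriminant vanishes, the sum is replaced by $\int_0^n\log t_\pm(s)\,ds$ plus Euler--Maclaurin (trapezoidal) corrections in the style of \cite{VAG89}. The integrand is a rational function of $s$ times the square root of a quadratic in $s$; the integral therefore evaluates to exactly a power, an exponential of the form $\exp[\,(\cdots)/(4d^2)\,]$ and --- after Stirling --- the factor $(n/e)^n$, while the half-node corrections supply the remaining algebraic prefactors and, in the oscillatory regions, the $-\pi/4$ and $-1/2$ sitting inside the trigonometric arguments. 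The undetermined multiplicative constant is pinned down by the normalization $\pi_n(x)\sim x^n$ as $x\to\infty$ (equivalently $t_m\to X$). One also sees here why $b$ is absent from all three formulas: $\sqrt{(X-ds)^2-4(as+b)}$ differs from $\sqrt{(X-ds)^2-4as}$ by a factor $1+O(b/n^2)$ over the bulk of the orbit, so $b$ enters only at order $O(1/n)$.

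The decisive step is the global matching, organized by the behaviour of the discriminant $(X-dm)^2-4(am+b)$ along the orbit $m=0,\dots,n$. For $z\in\C\cut[-\sqrt nd,2\sqrt a]$ it stays bounded away from zero throughout, so exactly one of $t_\pm$ dominates all the way; a standard argument (of the type in \cite{VAG89}) then gives $\pi_n=\Psi_n^+(1+O(1/\sqrt n))$, the recessive solution being exponentially negligible, which is \eqref{IA-1}. As $z$ crosses the soft edge $z=2\sqrt a$, a turning index $m^*\approx n+\sqrt n(z-2\sqrt a)/d$ appears near the right end of the orbit, past which the two characteristic roots become complex conjugate, so $\Psi_n^\pm$ are complex conjugate (the Wong--Li solutions \cite{WL92}) and $\pi_n=c_+\Psi_n^++c_-\Psi_n^-$; the connection coefficients are fixed by the local Airy behaviour at $z=2\sqrt a$ --- the Airy lemma of \cite{Wa01,WW02} with the matching procedure of \cite{WW12} --- and come out as the conjugate pair recombining $\Psi_n^\pm$ into $2\cos[(\cdots)\arccos(z/2\sqrt a)-\pi/4+\cdots]$, i.e. \eqref{IA-2}; equivalently, \eqref{IA-2} is the sum of the two boundary values of \eqref{IA-1} on the cut segment $(-2\sqrt a,2\sqrt a)$, on which $\sqrt{z^2-4a}=\pm i\sqrt{4a-z^2}$. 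Crossing the second turning value $z=-2\sqrt a$, a further turning index enters the orbit, and beyond it --- on the tail $m\in(m^*,n]$, where $X-dm<0$ while $am+b>0$ --- the two characteristic roots are real and \emph{negative}, so $t_+(m)<0$ there; carrying \eqref{IA-2} through a matching at $z=-2\sqrt a$, or equivalently continuing \eqref{IA-1} across the cut segment $(-\sqrt nd,-2\sqrt a)$ where the non-integer powers such as $(\cdots)^{-a/d^2-\sqrt nz/d}$ now sit on negative bases (and the amplitude square root picks up an $i$), one finds the two boundary values differ by the monodromy factor $e^{\pm i\pi(-a/d^2-\sqrt nz/d-1/2)}$, so their sum is exactly the $2\cos[\pi(-a/d^2-\sqrt nz/d-1/2)]$ of \eqref{IA-3}.

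The main obstacle is this last step, the connection analysis across the two turning values. The soft edge $z=2\sqrt a$ is a classical simple turning point at which the Airy lemma of \cite{Wa01,WW02} applies once its hypotheses on the coefficients are checked (routine here, since $A_m,B_m$ are affine), but $z=-2\sqrt a$ is unusual: it separates the oscillatory bulk from a \emph{second} oscillatory window rather than from an exponential region, and the transition is driven by a sign change of the characteristic roots rather than by their coalescence, so one must justify carefully that continuing \eqref{IA-1} across $(-\sqrt nd,-2\sqrt a)$ is legitimate and identify the correct normalization of the resulting cosine --- equivalently, count the sign changes $\#\{m\le n:\ t_+(m)<0\}$ exactly. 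A secondary difficulty is making the Euler--Maclaurin replacement of an $n$-term sum uniform in $z$ on the relevant neighbourhoods --- which grow, since $z$ ranges down to $-\sqrt nd$ --- and controlling the error near the interior turning index $m^*$ where the summand fails to be smooth; that, together with keeping the proliferating explicit factors mutually consistent, is where most of the bookkeeping lies.
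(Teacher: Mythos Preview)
Your derivation of the outer formula \eqref{IA-1} is exactly what the paper does: write $\pi_n=\prod_k w_k$ with $w_{k+1}=x-dk-(ak+b)/w_k$, approximate $w_k$ by the dominant characteristic root with a multiplicative correction, take logarithms, and replace the sum by an integral via the trapezoidal rule. So for that part your plan and the paper's proof coincide.

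Where you diverge is in the oscillatory region. You propose to fix the connection coefficients at $z=2\sqrt a$ and $z=-2\sqrt a$ by the Airy lemma of \cite{Wa01,WW02} together with the Wang--Wong matching \cite{WW12}, and you correctly flag the second transition $z=-2\sqrt a$ (oscillatory-to-oscillatory, driven by a sign change rather than a coalescence) as the main obstacle. The paper bypasses this machinery entirely. Once \eqref{IA-1} is in hand as $\pi_n\sim\Phi_n(z)$ off the cut, it simply defines the boundary values $\Phi_n^\pm(z)=\lim_{\varepsilon\to0^+}\Phi_n(z\pm i\varepsilon)$ and observes that for $z$ just above or below the cut one of $\Phi_n^\pm$ is exponentially negligible relative to the other, so $\Phi_n\sim\Phi_n^++\Phi_n^-$ there; by analytic continuation $\pi_n\sim\Phi_n^++\Phi_n^-$ holds throughout a neighbourhood of the open cut. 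Formulas \eqref{IA-2} and \eqref{IA-3} then drop out by direct computation of $\Phi_n^++\Phi_n^-$ on the two subintervals $(-2\sqrt a,2\sqrt a)$ and $(-\sqrt nd,-2\sqrt a)$ --- exactly the ``sum of boundary values'' interpretation you mention only as an afterthought. In particular the $-\pi/4$ in \eqref{IA-2} and the $-1/2$ inside the cosine in \eqref{IA-3} come from the branch structure of $\Phi_n$ (the $({\cdot})^{1/2}$ amplitude and the non-integer power $(\cdot)^{-a/d^2-\sqrt nz/d}$ on a negative base), not from any local Airy analysis. This sidesteps both difficulties you list: no turning-point connection formula is ever needed, and the uniformity issues near $m^*$ do not arise because one works only off the cut and then continues.

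So your route would work but is considerably heavier; the paper's analytic-continuation shortcut is the cleaner argument here, and the ``equivalently'' you tucked away is in fact the whole proof.
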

\begin{proof}
Denote
$$\pi_n(x)=\Pi_{k=1}^n w_k(x).$$
It follows that
$$w_{k+1}(x)=x-dk-{ak+b\over w_k(x)}.$$
Let $x=ny$ with $y\in\C\cut[0,d+2\sqrt a/\sqrt n]$.
We have as $n\to\infty$,
\begin{align*}
  w_k(x)\sim&{x-dk+\sqrt{(x-dk)^2-4ak}\over 2}
  \times\left\{1+{d\over2\sqrt{(x-dk)^2-4ak}}+{dx-d^2k\over2[(x-dk)^2-4ak]}\right\}.
\end{align*}
The above asymptotic formula can be obtained by successive approximation and proved rigorously by induction on $k$.
Since $(x-dk)^2-4ak$ is of order $O(n)$ for any $k=1,\cdots,n$, we have
\begin{align*}
  \ln\pi_n\sim&\sum_{k=1}^n\left\{\ln{x-dk+\sqrt{(x-dk)^2-4ak}\over 2}+{d\over2\sqrt{(x-dk)^2-4ak}}+{dx-d^2k\over2[(x-dk)^2-4ak]}\right\}.
\end{align*}
We will use trapezoidal rule to approximate the three summations on the right-hand side of the above formula.
Firstly, we obtain
\begin{align*}
  &\sum_{k=1}^n\ln{x-dk+\sqrt{(x-dk)^2-4ak}\over 2}=\sum_{k=1}^n\ln{ny-dk+\sqrt{(ny-dk)^2-4ak}\over 2}
  \\\sim& n\ln{n\over 2}+n\int_0^1\ln[y-dt+\sqrt{(y-dt)^2-4at/n}]dt
  +{1\over2}\ln{y-d+\sqrt{(y-d)^2-4a/n}\over2y}.
\end{align*}
A simple integration gives
\begin{align*}
  &n\int_0^1\ln[y-dt+\sqrt{(y-dt)^2-4at/n}]dt
  \\\sim& n\bigg\{t\ln[y-dt+\sqrt{(y-dt)^2-4at/n}+{\sqrt{(y-dt)^2-4at/n}\over2d}-{t\over2}
  \\&~~-({a\over nd^2}+{y\over d})\ln[dy+2a/n-d^2t+d\sqrt{(y-dt)^2-4at/n}]\bigg\}\bigg|_0^1
  \\\sim& n\ln[y-d+\sqrt{(y-d)^2-4a/n}]+{n\over2d}(\sqrt{(y-d)^2-4a/n}-y)-{n\over2}
  \\&~~-({a\over d^2}+{ny\over d})\ln{y-d+\sqrt{(y-d)^2-4a/n}+2a/(nd)\over2y+2a/(nd)}.
\end{align*}
For the sake of convenience, we introduce a new scale: $y=d+z/\sqrt n$ with $z\in\C\cut[-\sqrt n d,2\sqrt a]$.
It follows from the above two formulas that
\begin{align*}
  &\sum_{k=1}^n\ln{x-dk+\sqrt{(x-dk)^2-4ak}\over 2}
  \sim n\ln{n\over 2}+n\ln{z+\sqrt{z^2-4a}\over\sqrt n}+{\sqrt n\over2d}(\sqrt{z^2-4a}-z)-n
  \\&~~~~~~-({a\over d^2}+{\sqrt n(\sqrt n d+z)\over d})\ln{z+\sqrt{z^2-4a}+2a/(\sqrt n d)\over2(\sqrt n d+z+a/(\sqrt n d)}
  +{1\over2}\ln{z+\sqrt{z^2-4a}\over2(\sqrt n d+z)}.
\end{align*}
A further application of trapezoidal rule yields
\begin{align*}
  \sum_{k=1}^n\left\{{d\over2\sqrt{(x-dk)^2-4ak}}+{dx-d^2k\over2[(x-dk)^2-4ak]}\right\}
  \sim&\int_0^1{d\over2\sqrt{(y-dt)^2-4at/n}}+{dy-d^2t\over2[(y-dt)^2-4at/n]}dt
  \\\sim&{1\over2}\ln{2(\sqrt n d+z)\over z+\sqrt{z^2-4a}}+{1\over2}\ln{\sqrt n d+z\over\sqrt{z^2-4a}}.
\end{align*}
Adding the above two formulas gives
\begin{align*}
  \ln\pi_n\sim& n\ln{n\over 2}+n\ln{z+\sqrt{z^2-4a}\over\sqrt n}+{\sqrt n\over2d}(\sqrt{z^2-4a}-z)-n
  \\&~~-({a\over d^2}+{\sqrt n(\sqrt n d+z)\over d})\ln{z+\sqrt{z^2-4a}+2a/(\sqrt n d)\over2(\sqrt n d+z)+2a/(\sqrt n d)}
  +{1\over2}\ln{\sqrt n d+z\over\sqrt{z^2-4a}}.
\end{align*}
Since
\begin{align*}
  \ln{z+\sqrt{z^2-4a}+2a/(\sqrt n d)\over2(\sqrt n d+z)+2a/(\sqrt n d)}&\sim
  \ln{z+\sqrt{z^2-4a}\over2(\sqrt n d+z)}
  +{2a\over(\sqrt n d)(z+\sqrt{z^2-4a})}
  \\&~~-{2a^2\over d^2n(z+\sqrt{z^2-4a})^2}-{a\over(\sqrt n d)(\sqrt n d+z)},
\end{align*}
we have
\begin{align*}
  \ln\pi_n\sim& n\ln n+n\ln{z+\sqrt{z^2-4a}\over2\sqrt n}-n
  -({a\over d^2}+{\sqrt n(\sqrt n d+z)\over d})\ln{z+\sqrt{z^2-4a}\over2(\sqrt n d+z)}
  \\&~~+{\sqrt n(\sqrt{z^2-4a}-z)\over2d}-[{2a(\sqrt n d+z)\over(d^2)(z+\sqrt{z^2-4a})}-{2a^2\over d^2(z+\sqrt{z^2-4a})^2}-{a\over(d^2)}]
  +{1\over2}\ln{\sqrt n d+z\over\sqrt{z^2-4a}}.
\end{align*}
A simple calculation yields
\begin{align*}
  &{\sqrt n(\sqrt{z^2-4a}-z)\over2d}-[{2a(\sqrt n d+z)\over(d^2)(z+\sqrt{z^2-4a})}-{2a^2\over d^2(z+\sqrt{z^2-4a})^2}-{a\over(d^2)}]
  \\=&{\sqrt n d(\sqrt{z^2-4a}-z)\over2d^2}-[{(\sqrt n d+z)(z-\sqrt{z^2-4a})\over(2d^2)}-{(z-\sqrt{z^2-4a})^2\over 8d^2}-{a\over(d^2)}]
  \\=&{2\sqrt n d(\sqrt{z^2-4a}-z)\over4d^2}-[{2(\sqrt n d+z)(z-\sqrt{z^2-4a})\over(4d^2)}-{(z^2-2a-z\sqrt{z^2-4a})\over 4d^2}-{4a\over4d^2}]
  \\=&{-z^2+2a-4\sqrt n dz+(z+4\sqrt n d)\sqrt{z^2-4a}\over4d^2}.
\end{align*}
Consequently,
\begin{align*}
  \ln\pi_n\sim&  n\ln n-n+n\ln{z+\sqrt{z^2-4a}\over2\sqrt n}
  -({a\over d^2}+{\sqrt n(\sqrt n d+z)\over d})\ln{z+\sqrt{z^2-4a}\over2(\sqrt n d+z)}
  \\&~~+{-z^2+2a-4\sqrt n dz+(z+4\sqrt n d)\sqrt{z^2-4a}\over4d^2}
  +{1\over2}\ln{\sqrt n d+z\over\sqrt{z^2-4a}}.
\end{align*}
Recall that $x=ny$ and $y=d+z/\sqrt n$. For any $z\in\C\cut[-\sqrt n d,2\sqrt a]$, we have $\pi_n(nd+\sqrt nz)\sim\Phi_n(z)$ as $n\to\infty$, where
\begin{align*}
  \Phi_n(z):=&(n/e)^n({z+\sqrt{z^2-4a}\over2\sqrt n})^n({z+\sqrt{z^2-4a}\over2(\sqrt n d+z)})^{-a/d^2-\sqrt n(\sqrt n d+z)/d}
  \\&~~\times({\sqrt n d+z\over\sqrt{z^2-4a}})^{1/2}\times\exp[{2a-z^2-4\sqrt n dz+(z+4\sqrt n d)\sqrt{z^2-4a}\over4d^2}].
\end{align*}
This proves \eqref{IA-1}.
Note that $\Phi_n(z)$ has a branch cut on $[-\sqrt n d,2\sqrt a]$. We take the one-sided limits and define
\begin{align*}
  \Phi_n^\pm(z):=\lim_{\ep\to0^+}\Phi_n(z\pm i\ep),~~z\in(-\sqrt n d,2\sqrt a).
\end{align*}
It is readily seen that $\Phi_n^\pm(z)$ can be analytically extended to a neighborhood of $(-\sqrt n d,2\sqrt a)$.
Moreover, if $z=z_1+iz_2$ with $z_1\in(-\sqrt n d,2\sqrt a)$ and $z_2>0$, then $\Phi_n(z)=\Phi_n^+(z)$ and $\Phi_n^-(z)/\Phi_n^+(z)$ is exponentially small as $n\to\infty$.
On the other hand, if $z=z_1+iz_2$ with $z_1\in(-\sqrt n d,2\sqrt a)$ and $z_2<0$, then $\Phi_n(z)=\Phi_n^-(z)$ and $\Phi_n^+(z)/\Phi_n^-(z)$ is exponentially small as $n\to\infty$.
It follows that as $n\to\infty$, $\pi_n(nd+\sqrt nz)\sim\Phi_n(z)\sim\Phi_n^+(z)+\Phi_n^-(z)$ for all $z=z_1+iz_2$ with $z_1\in(-\sqrt n d,2\sqrt a)$ and $z_2\neq0$. By analytically continuity, we obtain $\pi_n(nd+\sqrt nz)\sim\Phi_n^+(z)+\Phi_n^-(z)$ for $z$ in a neighborhood of $(-\sqrt n d,2\sqrt a)$.
A simple calculation gives
\begin{align*}
  \Phi_n^+(z)+\Phi_n^-(z)=&(n/e)^n({\sqrt{a}\over\sqrt n})^{-a/d^2-\sqrt n z/d}
  \\&~~\times(d+z/\sqrt n)^{a/d^2+\sqrt n (\sqrt n d+z)/d}({\sqrt n d+z\over\sqrt{4a-z^2}})^{1/2}\exp[{2a-z^2-4\sqrt n dz\over4d^2}]
  \\&~~\times  2\cos[(n-{a\over d^2}-{\sqrt n (\sqrt n d+z)\over d})\arccos{z\over2\sqrt a}-{\pi\over4}+{(z+4\sqrt n d)\sqrt{4a-z^2}\over4d^2}]
\end{align*}
for $z$ in a neighborhood of $(-2\sqrt a,2\sqrt a)$, and
\begin{align*}
  \Phi_n^+(z)+\Phi_n^-(z)=&(n/e)^n({-z+\sqrt{-z-2\sqrt a}\sqrt{-z+2\sqrt a}\over2\sqrt n})^{-a/d^2-\sqrt n z/d}
  \\&~~\times(d+z/\sqrt n)^{a/d^2+\sqrt n (\sqrt n d+z)/d}({\sqrt n d+z\over\sqrt{-z-2\sqrt a}\sqrt{-z+2\sqrt a}})^{1/2}
  \\&~~\times\exp[{2a-z^2-4\sqrt n dz-(z+4\sqrt n d)\sqrt{-z-2\sqrt a}\sqrt{-z+2\sqrt a}\over4d^2}]
  \\&~~\times2\cos[\pi(-a/d^2-\sqrt n z/d-1/2)]
\end{align*}
for $z$ in a neighborhood of $(-\sqrt n d,-2\sqrt a)$. This completes the proof of \eqref{IA-2} and \eqref{IA-3}.
\end{proof}

\subsection{Case I.B: $a<0$}
For the case $a<0$, we observe from numerical simulation that the zeros of $\pi_n$ are not solely lying on the real line, instead, they will locate on a sideways Y-shape curve (cf. Figure \ref{fig-Y-shape}). This will be theoretically justified in the following theorem.
\begin{thm}
Assume $d>0$ and $a<0$. Let $x=ny$ and $y=d+z/\sqrt n$. Denote $A=-a>0$.
Let $\G_A$ be the curve in the left-half complex plane defined by the following equation
\begin{equation}\label{GA}
  \re\left\{2\sqrt{z-2i\sqrt{A}}\sqrt{z+2i\sqrt{A}}-z\ln{z+\sqrt{z-2i\sqrt{A}}\sqrt{z+2i\sqrt{A}}\over z-\sqrt{z-2i\sqrt{A}}\sqrt{z+2i\sqrt{A}}}\right\}=0.
\end{equation}
It is noted that the above equation formulates a sideways V-shape curve that is symmetric about the x-axis with two end points $\pm2i\sqrt A$; see Figure \ref{fig-Y-shape}.
Let $z_A<0$ be the intersection of $\G_A$ with the negative real line. To be specific, $z_A$ is the negative real root of the following equation
\begin{align}\label{zA}
  2\sqrt{z_A^2+4A}-z_A\ln{z_A+\sqrt{z_A^2+4A}\over-z_A+\sqrt{z_A^2+4A}}=0.
\end{align}
As $n\to\infty$, we have for $z\in\C\cut([-\sqrt n d,z_A]\cup\G_A)$,
\begin{align}\label{IB-1}
  \pi_n(nd+\sqrt nz)\sim&(n/e)^n({z+\sqrt{z-2i\sqrt{A}}\sqrt{z+2i\sqrt{A}}\over2\sqrt n})^{A/d^2-\sqrt n z/d}
  \nonumber\\&~~\times(d+z/\sqrt n)^{-A/d^2+\sqrt n (\sqrt n d+z)/d}({\sqrt n d+z\over\sqrt{z-2i\sqrt{A}}\sqrt{z+2i\sqrt{A}}})^{1/2}
  \nonumber\\&~~\times\exp[{-2A-z^2-4\sqrt n dz+(z+4\sqrt n d)\sqrt{z-2i\sqrt{A}}\sqrt{z+2i\sqrt{A}}\over4d^2}];
\end{align}
and for $z$ in a neighborhood of $(-\sqrt n d,z_A)$, we have
\begin{align}\label{IB-2}
  \pi_n(nd+\sqrt nz)\sim&(n/e)^n({-z+\sqrt{-z-2i\sqrt{A}}\sqrt{-z+2i\sqrt{A}}\over2\sqrt n})^{A/d^2-\sqrt n z/d}
  \nonumber\\&~~\times(d+z/\sqrt n)^{-A/d^2+\sqrt n (\sqrt n d+z)/d}({\sqrt n d+z\over\sqrt{-z-2i\sqrt{A}}\sqrt{-z+2i\sqrt{A}}})^{1/2}
  \nonumber\\&~~\times\exp[{-2A-z^2-4\sqrt n dz-(z+4\sqrt n d)\sqrt{-z-2i\sqrt{A}}\sqrt{-z+2i\sqrt{A}}\over4d^2}]
  \nonumber\\&~~\times2\cos[\pi[A/d^2-\sqrt n z/d-1/2]];
\end{align}
and for $z$ in a neighborhood of $\mathring\G_A:=\G_A\cut\{z_A,\pm 2i\sqrt{A}\}$, we have
\begin{align}\label{IB-3}
  &\pi_n(nd+\sqrt nz)\sim(\sqrt n/e)^n(\sqrt nd+z)^{-A/d^2+\sqrt n (\sqrt n d+z)/d+1/2}
  \times\exp[{-2A-z^2-4\sqrt n dz\over4d^2}]
  \nonumber\\&~~\times\{{[(z+\sqrt{z-2i\sqrt{A}}\sqrt{z+2i\sqrt{A}})/2]^{A/d^2-\sqrt n z/d}
  \over(\sqrt{z-2i\sqrt{A}}\sqrt{z+2i\sqrt{A}})^{1/2}}
  \exp[{(z+4\sqrt n d)\sqrt{z-2i\sqrt{A}}\sqrt{z+2i\sqrt{A}}\over4d^2}]
  \nonumber\\&~~~~~+{[(z-\sqrt{z-2i\sqrt{A}}\sqrt{z+2i\sqrt{A}})/2]^{A/d^2-\sqrt n z/d}
  \over(-\sqrt{z-2i\sqrt{A}}\sqrt{z+2i\sqrt{A}})^{1/2}}
  \exp[{-(z+4\sqrt n d)\sqrt{z-2i\sqrt{A}}\sqrt{z+2i\sqrt{A}}\over4d^2}]\}.
\end{align}
\end{thm}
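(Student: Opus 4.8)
The plan is to run exactly the machinery of the proof of Theorem~\ref{IA} with the formal substitution $a\mapsto -A$, and then to sort out the branch structure that this sign change creates. First I would write $\pi_n(x)=\prod_{k=1}^n w_k(x)$ with $w_{k+1}(x)=x-dk-(ak+b)/w_k(x)$, set $x=ny$, and derive by successive approximation (made rigorous by induction on $k$) the same asymptotic formula for $w_k(x)$ as in Theorem~\ref{IA}, the only change being that $(x-dk)^2-4ak$ is everywhere replaced by $(x-dk)^2+4Ak$. The induction is valid as long as $(x-dk)^2+4Ak$ stays of order $O(n)$ and bounded away from $0$ uniformly in $k\le n$; in the $z$ scale $y=d+z/\sqrt n$, $k=nt$, this amounts to keeping $z$ off the ($n$-dependent) arc $\{-\sqrt nd(1-t)\pm 2i\sqrt{At}:t\in(0,1]\}$, which joins $-\sqrt nd$ to $\pm 2i\sqrt A$ and encloses the sideways Y of the statement. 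Taking logarithms and applying the trapezoidal rule to the three resulting sums exactly as before, the integrals coincide with those of Theorem~\ref{IA} after $a\mapsto -A$; in particular $\sqrt{(y-dt)^2-4at/n}$ becomes $\sqrt{(y-dt)^2+4At/n}$ and $\sqrt{z^2-4a}$ becomes $W(z):=\sqrt{z-2i\sqrt A}\sqrt{z+2i\sqrt A}$, with $W^2=z^2+4A$. This produces $\pi_n(nd+\sqrt nz)\sim\Phi_n(z)$, with $\Phi_n$ the right-hand side of \eqref{IB-1}, once a branch of $W$ is fixed.

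The geometry enters in that choice of branch. I would take the branch cut of $W$ to be the arc $\G_A$ of \eqref{GA}, which is the anti-Stokes curve
\[
  \{\,z:\re g(z)=0\,\},\qquad g(z):=2W(z)-z\ln\frac{z+W(z)}{z-W(z)},\qquad g'(z)=-\ln\frac{z+W(z)}{z-W(z)},
\]
because $g(z)$ is exactly $d/\sqrt n$ times the leading term of $\ln\Phi_n^{(+W)}-\ln\Phi_n^{(-W)}$, so $\re g$ records which square-root branch dominates. From $g(\bar z)=\overline{g(z)}$ this set is symmetric about the real axis; $g(\pm 2i\sqrt A)=0$; and on the real axis $\re g$ equals $f(z):=2\sqrt{z^2+4A}-z\ln\bigl((z+\sqrt{z^2+4A})/(-z+\sqrt{z^2+4A})\bigr)$, which is strictly increasing on $(-\infty,0)$ from $-\infty$ to $4\sqrt A$ (here $f'(z)=-\ln\bigl((z+\sqrt{z^2+4A})/(-z+\sqrt{z^2+4A})\bigr)>0$). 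Hence $\G_A$ is the asserted symmetric ``sideways V'' and meets the negative real axis at the unique root $z_A$ of \eqref{zA}. With this choice $W$, hence $\Phi_n$, is single-valued on $\C\cut\G_A$; but on both sides of the segment $[-\sqrt nd,z_A]$ the continuation of $W$ from $+\infty$ has flipped to $-\sqrt{z^2+4A}$ (reaching that segment from the positive real axis requires crossing a single arm of $\G_A$), so there $z+W<0$ and $\Phi_n$ genuinely jumps. Thus $\Phi_n$ is analytic precisely on $\C\cut([-\sqrt nd,z_A]\cup\G_A)$, and $\pi_n\sim\Phi_n$ there: this is the direct output of the first paragraph when $z$ lies outside the arc $\{-\sqrt nd(1-t)\pm 2i\sqrt{At}\}$, and for $z$ inside that arc but off the sideways Y it follows by deforming the $t$-contour past the pinching turning point and observing that the additional contribution is exponentially subdominant. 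This proves \eqref{IB-1}, with $\Phi_n$ always the dominant branch.

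For the oscillatory regimes I would take one-sided boundary values $\Phi_n^\pm(z)=\lim_{\ep\to 0^+}\Phi_n(z\pm i\ep)$ on the cut, exactly as in Theorem~\ref{IA}. On the tail $(-\sqrt nd,z_A)$ both boundary values use the branch $W=-\sqrt{z^2+4A}$ and differ only through the phases that $\bigl((z+W)/(2\sqrt n)\bigr)^{A/d^2-\sqrt nz/d}$ (now with negative-real base) and $\bigl((\sqrt nd+z)/W\bigr)^{1/2}$ acquire; since $\re g<0$ there the $+W$ branch is exponentially negligible, so $\pi_n\sim\Phi_n^++\Phi_n^-$, and the same algebra that turned the $a>0$ outer formula into \eqref{IA-3} now gives \eqref{IB-2}, the factor $2\cos[\pi(A/d^2-\sqrt nz/d-1/2)]$ being the sum of the two conjugate phases and the $-1/2$ coming from the half-power of $W$. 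On $\mathring\G_A$ the two boundary values are instead the $+W$ and $-W$ branches (crossing $\G_A$ flips $W$), and $\re g=0$ there, i.e.\ $|\Phi_n^+|=|\Phi_n^-|$, so neither dominates and $\pi_n\sim\Phi_n^++\Phi_n^-$, which written out is \eqref{IB-3}; as in Theorem~\ref{IA} one first checks that off $\G_A$ one branch is exponentially larger (so $\pi_n\sim\Phi_n$, the dominant one) while on $\G_A$ they balance, then extends to a full neighborhood of $\mathring\G_A$ by analytic continuity.

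The computations inherited from Theorem~\ref{IA} are routine once $a$ is replaced by $-A$; the real work, and the main obstacle, is the global branch bookkeeping of the second paragraph: showing that $\{\re g=0\}$ has exactly one relevant component — a simple symmetric arc from $2i\sqrt A$ to $-2i\sqrt A$ through $z_A$ lying in $\{\re z<0\}$, so that $\G_A$ and the sideways Y are well defined — and correctly tracking which branch of $W$ is the dominant (hence correct) one in each region of the complement, which is what forces the auxiliary cut $[-\sqrt nd,z_A]$ and produces \eqref{IB-2} rather than a spurious $2\Phi_n$. Equivalently, one must control the trapezoidal-rule error uniformly as $z$ approaches the sideways Y, where a pair of turning points in the $t$-variable pinches the integration contour; this Stokes-type transition is the technical crux.
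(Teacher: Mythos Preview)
Your proposal is correct and follows essentially the same route as the paper: carry over the computation of Theorem~\ref{IA} verbatim with $a\mapsto -A$ to obtain $\pi_n\sim\Phi_n$ in the outer region, identify the Y-shaped branch cut $[-\sqrt n d,z_A]\cup\G_A$ of $\Phi_n$, take one-sided boundary values $\Phi_n^\pm$, observe that their ratio is exponentially large/small on either side of the cut (by the very definition of $\G_A$), and conclude $\pi_n\sim\Phi_n^++\Phi_n^-$ there, which unpacks into \eqref{IB-2} and \eqref{IB-3}. The paper's own proof is a terse two paragraphs that simply asserts the cut structure and quotes the ratio $\Phi_n^+/\Phi_n^-$; your added discussion of $g(z)$, the monotonicity of $f$ giving uniqueness of $z_A$, the branch bookkeeping for $W$ on $(-\sqrt n d,z_A)$, and the turning-point arc in the $t$-variable are genuine elaborations of points the paper leaves implicit, not a different method.
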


\begin{figure}\label{fig-Y-shape}
\centering
\includegraphics[width=0.7\linewidth,height=8cm]{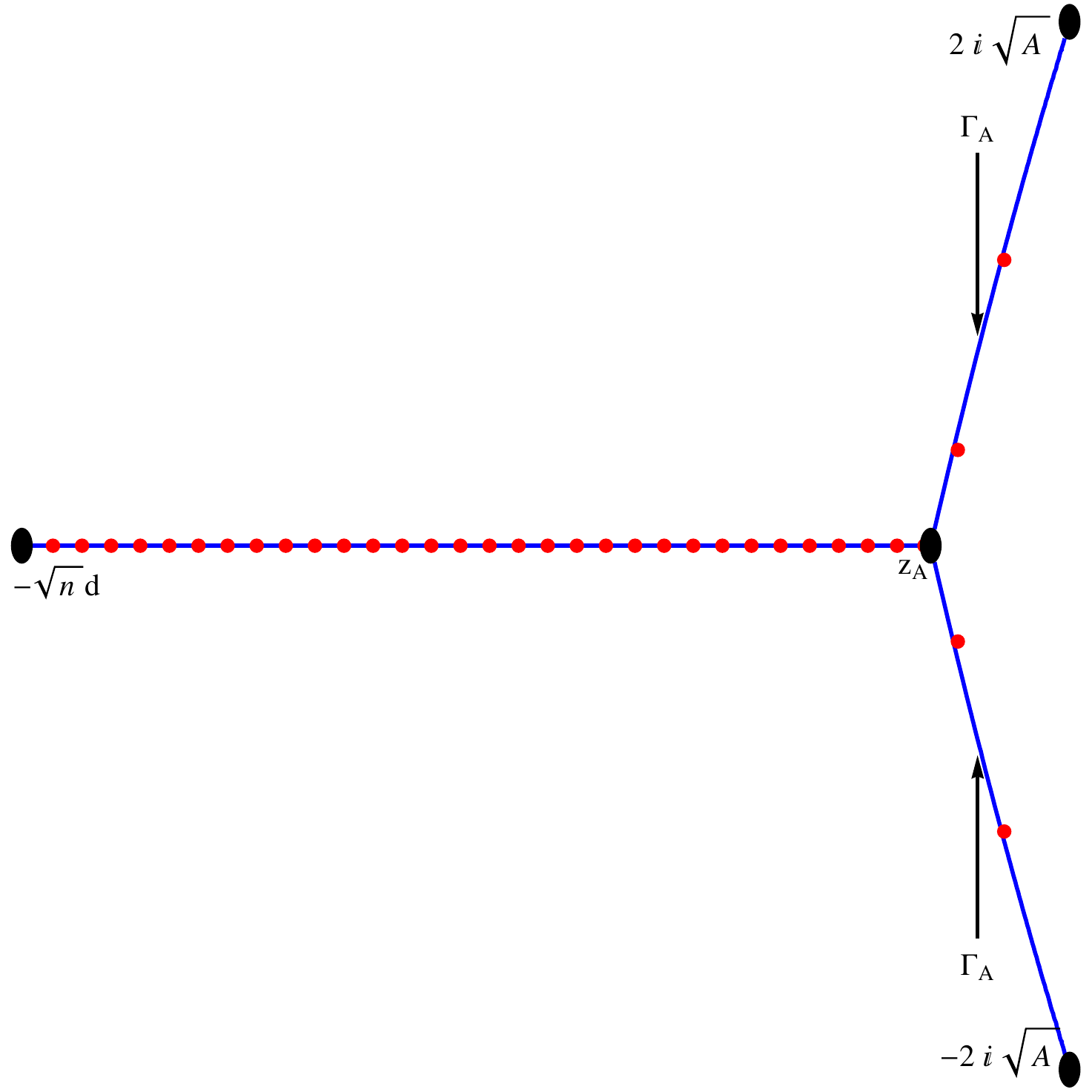}
\caption{The sideways Y-shape branch cut (curve) and zero distribution (dots).}
\end{figure}

\begin{proof}
  Similar to the proof of Theorem \ref{IA}, we obtain for $z\in\C\cut([-\sqrt n d,z_A]\cup\G_A)$,
\begin{align*}
  \pi_n(nd+\sqrt nz)\sim&(n/e)^n({z+\sqrt{z^2-4a}\over2\sqrt n})^{-a/d^2-\sqrt n z/d}
  \times(d+z/\sqrt n)^{a/d^2+\sqrt n (\sqrt n d+z)/d}({\sqrt n d+z\over\sqrt{z^2-4a}})^{1/2}
  \\&~~\times\exp[{2a-z^2-4\sqrt n dz+(z+4\sqrt n d)\sqrt{z^2-4a}\over4d^2}]
  \\\sim&(n/e)^n({z+\sqrt{z-2i\sqrt{A}}\sqrt{z+2i\sqrt{A}}\over2\sqrt n})^{A/d^2-\sqrt n z/d}
  \\&~~\times(d+z/\sqrt n)^{-A/d^2+\sqrt n (\sqrt n d+z)/d}({\sqrt n d+z\over\sqrt{z-2i\sqrt{A}}\sqrt{z+2i\sqrt{A}}})^{1/2}
  \\&~~\times\exp[{-2A-z^2-4\sqrt n dz+(z+4\sqrt n d)\sqrt{z-2i\sqrt{A}}\sqrt{z+2i\sqrt{A}}\over4d^2}].
\end{align*}
This gives \eqref{IB-1}. Denote the right-hand side of \eqref{IB-1} by $\Phi_n(z)$.
Note that $\Phi_n(z)$ is analytic on the complex plane except for a Y-shape branch cut $[-\sqrt n d,z_A]\cup\G_A$ that connects $-\sqrt nd$ and $\pm2i\sqrt A$.
Moreover, $\Phi_n(z)$ is one-side continuous on the branch cut.
Therefore, the functions
$$\Phi_n^\pm(z):=\lim_{\ep\to0^+}\Phi_n(z+i\ep)$$ are analytic in a neighborhood of the branch cut.
Note that for $z$ in a neighborhood of $(-\infty,z_A)$,
\begin{align*}
  {\Phi_n^+(z)\over\Phi_n^-(z)}&=\exp[2\pi i(A/d^2-\sqrt n z/d-1/2)];
\end{align*}
and for $z$ in a neighborhood of $\mathring\G_A$,
\begin{align*}
  {\Phi_n^+(z)\over\Phi_n^-(z)}&=-i({z+\sqrt{z-2i\sqrt{A}}\sqrt{z+2i\sqrt{A}}\over z-\sqrt{z-2i\sqrt{A}}\sqrt{z+2i\sqrt{A}}})^{A/d^2-\sqrt n z/d}
  \exp({z+4\sqrt n d\over2d^2}\sqrt{z-2i\sqrt{A}}\sqrt{z+2i\sqrt{A}}).
\end{align*}
It follows from the definition of $\G_A$ in \eqref{GA} that the ratio $\Phi_n^+/\Phi_n^-$ is exponentially large on one side and exponentially small on the other side of the branch cut $[-\sqrt n d,z_A]\cup\G_A$.
Using a similar argument in the proof of Theorem \ref{IA}, we obtain
$\pi_n(nd+\sqrt nz)\sim[\Phi_n^+(z)+\Phi_n^-(z)]$ for $z$ in a neighborhood of $(-\sqrt n d,z_A)\cup\mathring\G_A$.
A simple calculation yields
\begin{align*}
  \Phi_n^+(z)+\Phi_n^-(z)&=(n/e)^n({-z+\sqrt{z-2i\sqrt{A}}\sqrt{z+2i\sqrt{A}}\over2\sqrt n})^{A/d^2-\sqrt n z/d}
  \nonumber\\&~~\times(d+z/\sqrt n)^{-A/d^2+\sqrt n (\sqrt n d+z)/d}({\sqrt n d+z\over\sqrt{z-2i\sqrt{A}}\sqrt{z+2i\sqrt{A}}})^{1/2}
  \nonumber\\&~~\times\exp[{-2A-z^2-4\sqrt n dz-(z+4\sqrt n d)\sqrt{z-2i\sqrt{A}}\sqrt{z+2i\sqrt{A}}\over4d^2}]
  \nonumber\\&~~\times2\cos[\pi[A/d^2-\sqrt n z/d-1/2]]
\end{align*}
for $z$ in a neighborhood of $(-\sqrt n d,z_A)$, and
\begin{align*}
  &\Phi_n^+(z)+\Phi_n^-(z)
  \\=&(\sqrt n/e)^n(\sqrt nd+z)^{-A/d^2+\sqrt n (\sqrt n d+z)/d+1/2}
  \times\exp[{-2A-z^2-4\sqrt n dz\over4d^2}]
  \nonumber\\&\times\{{[(z+\sqrt{z-2i\sqrt{A}}\sqrt{z+2i\sqrt{A}})/2]^{A/d^2-\sqrt n z/d}
  \over(\sqrt{z-2i\sqrt{A}}\sqrt{z+2i\sqrt{A}})^{1/2}}
  \exp[{(z+4\sqrt n d)\sqrt{z-2i\sqrt{A}}\sqrt{z+2i\sqrt{A}}\over4d^2}]
  \nonumber\\&~+{[(z-\sqrt{z-2i\sqrt{A}}\sqrt{z+2i\sqrt{A}})/2]^{A/d^2-\sqrt n z/d}
  \over(-\sqrt{z-2i\sqrt{A}}\sqrt{z+2i\sqrt{A}})^{1/2}}
  \exp[{-(z+4\sqrt n d)\sqrt{z-2i\sqrt{A}}\sqrt{z+2i\sqrt{A}}\over4d^2}]\}
\end{align*}
for $z$ in a neighborhood of $\mathring\G_A$. This proves \eqref{IB-2} and \eqref{IB-3}.
\end{proof}

\subsection{Case I.C: $a=0$}
\begin{thm}
Assume $d>0$ and $a=0$. Let $x=ny$, we have as $n\to\infty$, If $y$ is bounded away from $[0,d]$, we have
\begin{align}\label{IC-1}
\pi_n(ny)\sim (n/e)^n({y\over y-d})^{ny/d+1/2}(y-d)^n
\end{align}
for $y\in\C\cut[0,d]$; and
\begin{align}\label{IC-2}
  \pi_n(ny)\sim&(n/e)^n(d-y)^n({y\over d-y})^{ny/d+1/2}
  \times2\cos[\pi(n-ny/d-1/2)].
\end{align}
for $y$ in a neighborhood of $(0,d)$.
\end{thm}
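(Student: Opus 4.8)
\emph{Proof idea.}
The plan is to run the scheme of the proof of Theorem~\ref{IA}, which here collapses to something much simpler: with $a=0$ the coefficient $B_n=b$ is constant, so each factor in the product representation of $\pi_n$ is, to leading order, linear in $x$, and there is no square root and hence no turning point in the variable $y$. As before, write $\pi_n(x)=\Pi_{k=1}^n w_k(x)$ with $w_1(x)=x$ and $w_{k+1}(x)=x-dk-b/w_k(x)$. Fix a compact set $K\subset\C\cut[0,d]$ and put $x=ny$ with $y\in K$. Since $0$ does not lie on the segment joining $y-d$ and $y$ when $y\notin[0,d]$, one has $|x-d(k-1)|=n\,|y-d(k-1)/n|\ge cn$ uniformly in $1\le k\le n$ and $y\in K$. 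First I would prove, by successive approximation and induction on $k$ (the $a=0$ analogue of the estimate used in Theorem~\ref{IA}, but with a linear leading term), that $w_k(x)=(x-d(k-1))(1+\eta_k(x))$ with $\sup_{1\le k\le n}\sup_{y\in K}|\eta_k(x)|=O(n^{-2})$: the base case is exact ($\eta_1=0$, since $\pi_n=\Pi_{j=0}^{n-1}(x-dj)$ when $b=0$), and the step follows from $|w_k|\ge cn/2$ and $|\eta_{k+1}|=|b/w_k|/|x-dk|\le 2|b|/(c^2n^2)$.

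Next I would add up. Because $\sum_{k=1}^n|\eta_k|=O(n^{-1})$, this yields $\ln\pi_n(ny)=\sum_{j=0}^{n-1}\ln(ny-dj)+O(n^{-1})$, so the constant $b$ contributes nothing to the Plancherel--Rotach asymptotics. Applying the trapezoidal rule exactly as in Theorem~\ref{IA} to $g(t)=\ln n+\ln(y-dt)$ gives $\sum_{j=0}^{n-1}\ln(ny-dj)=n\int_0^1 g(t)\,dt+\frac12[g(0)-g(1)]+O(n^{-1})$, which equals $n\ln n-n+\frac nd[y\ln y-(y-d)\ln(y-d)]+\frac12\ln\frac{y}{y-d}+o(1)$; exponentiating produces \eqref{IC-1}. (Equivalently, $\Pi_{j=0}^{n-1}(ny-dj)=d^n\,\Gamma(ny/d+1)/\Gamma(n(y-d)/d+1)$ and one invokes Stirling; it is also a useful check that \eqref{IC-1} is precisely the formal $a\to 0$ limit of \eqref{IA-1}.)

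For the oscillatory region I would denote by $\Psi_n(y)$ the right-hand side of \eqref{IC-1}. Since the M\"obius map $y\mapsto y/(y-d)$ sends $\C\cut[0,d]$ onto $\C\cut(-\infty,0]$, the principal branch makes $\Psi_n$ analytic on $\C\cut[0,d]$ with a jump only across $(0,d)$; set $\Psi_n^\pm(y):=\lim_{\ep\to0^+}\Psi_n(y\pm i\ep)$. One computes $\Psi_n^+/\Psi_n^-=\exp[-2\pi i(ny/d+1/2)]$, of modulus $\exp[2\pi n\,\mbox{Im}\,y/d]$, so on each side of $(0,d)$ exactly one of $\Psi_n^\pm$ is exponentially dominant and coincides with $\Psi_n$; hence $\pi_n(ny)\sim\Psi_n\sim\Psi_n^++\Psi_n^-$ off the cut, and, both sides being analytic near $(0,d)$, the relation passes to a full neighbourhood of $(0,d)$ by the same analytic-continuation argument as in Theorem~\ref{IA}. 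A direct evaluation of $\Psi_n^++\Psi_n^-$ on $(0,d)$, using $(y-d)^n=(-1)^n(d-y)^n$, then $(-1)^n\cos\th=\cos(\th+n\pi)$, and finally $\cos[\pi(n+ny/d+1/2)]=\cos[\pi(n-ny/d-1/2)]$ (the two arguments differing by $2\pi n$ and cosine being even), gives exactly \eqref{IC-2}.

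The step I expect to be the real point is the uniform bound $|\eta_k|=O(n^{-2})$ together with the ensuing $\sum_k|\eta_k|\to 0$: this is what certifies that the constant $b$, although it sits in the recurrence, drops out of the asymptotics entirely, and it works only because $|w_k|\asymp n$ away from $[0,d]$. That same estimate degenerates as $y$ approaches $(0,d)$, which is precisely why the oscillatory formula must be read off from the branch structure of $\Psi_n$ rather than proved by the direct product argument. Everything else is routine integration and the same branch-cut/dominant-balance bookkeeping already used in Theorem~\ref{IA}.
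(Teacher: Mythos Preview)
Your proof is correct, but it takes a genuinely different route from the paper's own argument. The paper proves this theorem in two sentences: it simply sets $z=\sqrt n(y-d)$ in the formula \eqref{IA-1} already established for $a>0$ and lets $a\to0^+$ to read off \eqref{IC-1}, then invokes the same analytic-continuation device for \eqref{IC-2}. You instead rerun the whole product/trapezoidal scheme directly at $a=0$: you show $w_k(x)=(x-d(k-1))(1+O(n^{-2}))$ uniformly, conclude that the constant $b$ contributes nothing, and then sum $\sum_{j=0}^{n-1}\ln(ny-dj)$ by trapezoidal rule (or, equivalently, by the Gamma-ratio/Stirling identity you mention). The paper's approach is much shorter but tacitly assumes that the asymptotic relation in Theorem~\ref{IA} is uniform enough in $a$ to survive the limit $a\to0^+$, a point not addressed there; your approach is longer but fully self-contained and makes transparent \emph{why} $b$ drops out (via the summable $O(n^{-2})$ bound on $\eta_k$). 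Your observation that \eqref{IC-1} is the formal $a\to0$ limit of \eqref{IA-1} is exactly the paper's whole proof, used by you only as a sanity check. The oscillatory part is handled identically in both arguments.
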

\begin{proof}
  Setting $z=\sqrt n(y-d)$ in \eqref{IA-1} and taking limit $a\to0^+$ yields \eqref{IC-1}. A standard argument of analytical continuity as in the proof of Theorem \ref{IA} gives \eqref{IC-2}.
\end{proof}

\section{Case II: $d=0$}
In this section, we consider the critical case $d=0$. Again, we investigate three subcases according to the sign of $a$.
\subsection{Case II.A: $a>0$}
\begin{thm}\label{IIA}
Assume $d=0$ and $a>0$. Let $x=\sqrt n y$. As $n\to\infty$, we have for $y\in\C\cut[-2\sqrt a,2\sqrt a]$,
\begin{align}\label{IIA-1}
  \pi_n(\sqrt n y)\sim&({n\over4e})^{n/2}(y+\sqrt{y^2-4a})^n({y+\sqrt{y^2-4a}\over2\sqrt{y^2-4a}})^{1/2}({y+\sqrt{y^2-4a}\over2y})^{b/a}
  \times\exp[{ny\over4a}(y-\sqrt{y^2-4a})];
\end{align}
and for $y$ in a neighborhood of $(0,2\sqrt a)$, we have
\begin{align}\label{IIA-2}
  \pi_n(\sqrt n y)\sim&({na\over e})^{n/2}({\sqrt a\over\sqrt{2\sqrt a-y}\sqrt{2\sqrt a+y}})^{1/2}({\sqrt a\over y})^{b/a}\times\exp[{ny^2\over4a}]
  \nonumber\\&~~\times2\cos[(n+1/2+b/a)\arccos{y\over2\sqrt a}-\pi/4-{ny\over4a}\sqrt{2\sqrt a-y}\sqrt{2\sqrt a+y}];
\end{align}
and for $y$ in a neighborhood of $(-2\sqrt a,0)$, we have
\begin{align}\label{IIA-3}
  \pi_n(\sqrt n y)\sim&({na\over e})^{n/2}(-1)^n({\sqrt a\over\sqrt{2\sqrt a-y}\sqrt{2\sqrt a+y}})^{1/2}({\sqrt a\over-y})^{b/a}\times\exp[{ny^2\over4a}]
  \nonumber\\&~~\times2\cos[(n+1/2+b/a)\arccos{-y\over2\sqrt a}-\pi/4+{ny\over4a}\sqrt{2\sqrt a-y}\sqrt{2\sqrt a+y}]
\end{align}
\end{thm}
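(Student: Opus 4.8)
The plan is to mimic the proof of Theorem~\ref{IA}. Write $\pi_n(x)=\prod_{k=1}^n w_k(x)$ with $w_k=\pi_k/\pi_{k-1}$, so that (because $d=0$) $w_{k+1}(x)=x-(ak+b)/w_k(x)$, put $x=\sqrt n\,y$, and let $\phi_k=\tfrac12(x+\sqrt{x^2-4ak})$ (the root of $\phi^2-x\phi+ak=0$ that is $\sim x$ for large $|y|$), so that $w_k\sim\phi_k$ to leading order. The one structural difference from Subcase~I.A is a matter of scaling: here $x^2-4ak$ has size $n$ and $\phi_k$ size $\sqrt n$, so the constant $b$ in $B_k=ak+b$ — harmless in Subcase~I.A — now contributes at order $n^{-1}$ in each factor and hence at order $1$ in $\ln\pi_n$. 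Solving the linearised error recurrence $\varepsilon_{k+1}=(\bar\phi_k/\phi_k)\varepsilon_k+s_k$ for the relative error $\varepsilon_k=w_k/\phi_k-1$, whose driving term now carries a $b/\phi_k^2$ piece alongside the discreteness piece $\phi_k'/\phi_k$, leads to
\[
  w_k(x)\sim\phi_k\Bigl(1+\frac{a}{x^2-4ak}-\frac{b}{\phi_k\sqrt{x^2-4ak}}\Bigr)
\]
uniformly for $k=1,\dots,n$; I would prove this by induction on $k$ as in Theorem~\ref{IA}, observing that $|\bar\phi_k/\phi_k|$ stays bounded away from $1$ on compact subsets of $\C\cut[-2\sqrt a,2\sqrt a]$, so the transient caused by the mismatch with $w_1=x$ decays geometrically and contributes $o(1)$ to $\sum_k\ln w_k$.

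Then I would put this into $\ln\pi_n\sim\sum_{k=1}^n\ln w_k$ and evaluate the resulting sums by the trapezoidal rule in $t=k/n$. The main sum $\sum_k\ln\phi_k$ gives $\tfrac n2\ln n+n\int_0^1\ln\tfrac12(y+\sqrt{y^2-4at})\,dt+\tfrac12\ln\tfrac{y+\sqrt{y^2-4a}}{2y}$; the elementary integral (substitution $s=\sqrt{y^2-4at}$, then integration by parts) contributes the factor $(y+\sqrt{y^2-4a})^n$, the exponential $\exp[\tfrac{ny}{4a}(y-\sqrt{y^2-4a})]$, and an $e^{-n/2}$ that merges with $n^{n/2}$ into $(n/4e)^{n/2}$. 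The correction sums give $\int_0^1\tfrac{a\,dt}{y^2-4at}=\tfrac14\ln\tfrac{y^2}{y^2-4a}$ and $-\int_0^1\tfrac{2b\,dt}{(y+\sqrt{y^2-4at})\sqrt{y^2-4at}}=\tfrac ba\ln\tfrac{y+\sqrt{y^2-4a}}{2y}$; the latter is the source of the factor $\bigl(\tfrac{y+\sqrt{y^2-4a}}{2y}\bigr)^{b/a}$, while the former combines with the trapezoidal endpoint term and, via the identity $\tfrac14\ln\tfrac{(y+\sqrt{y^2-4a})^2}{4(y^2-4a)}=\tfrac12\ln\tfrac{y+\sqrt{y^2-4a}}{2\sqrt{y^2-4a}}$, produces the prefactor $\bigl(\tfrac{y+\sqrt{y^2-4a}}{2\sqrt{y^2-4a}}\bigr)^{1/2}$. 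Collecting everything proves \eqref{IIA-1} for $y\in\C\cut[-2\sqrt a,2\sqrt a]$.

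For the oscillatory formulas I would run the analytic-continuation argument of Theorem~\ref{IA}. Denote by $\Psi_n(y)$ the right-hand side of \eqref{IIA-1}; it is analytic off the cut $[-2\sqrt a,2\sqrt a]$ and one-sided continuous on it, and the boundary values $\Psi_n^\pm(y)=\lim_{\varepsilon\to0^+}\Psi_n(y\pm i\varepsilon)$ continue analytically across the cut. Since the only $n$-dependent phase lives in $(y+\sqrt{y^2-4a})^n$ and $\exp[\tfrac{ny}{4a}(y-\sqrt{y^2-4a})]$, a short computation (with $\sqrt{y^2-4a}=\pm i\sqrt{4a-y^2}$ and $y=2\sqrt a\cos\theta$ on the cut) shows that $\Psi_n^+/\Psi_n^-$ is exponentially large on one side of the cut and exponentially small on the other; exactly as in Theorem~\ref{IA} this yields $\pi_n(\sqrt n y)\sim\Psi_n^+(y)+\Psi_n^-(y)$ near the cut, and the sum of the two complex-conjugate expressions turns into $2\cos(\cdots)$, giving \eqref{IIA-2}. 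The delicate point is that the factor $\bigl(\tfrac{y+\sqrt{y^2-4a}}{2y}\bigr)^{b/a}$ (and, to a lesser extent, the $\tfrac12$-power prefactor) has a branch point at $y=0$, which lies inside the cut, so the orientation of the one-sided limits — and hence the explicit form of $\Psi_n^++\Psi_n^-$ — changes as $y$ passes through $0$: on $(-2\sqrt a,0)$ one writes $y=-2\sqrt a\cos\tilde\theta$, whence $y+i\sqrt{4a-y^2}=-2\sqrt a\,e^{-i\tilde\theta}$, and the factor $(-1)^n$ in \eqref{IIA-3} together with the replacement of $(\sqrt a/y)^{b/a}$ by $(\sqrt a/(-y))^{b/a}$ is exactly the effect of this sign. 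Tracking the phases then gives \eqref{IIA-3}.

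The step I expect to be the main obstacle is the successive-approximation and bookkeeping stage: one must correctly identify which correction terms survive at order $n^{-1}$ under the new scaling (in particular capturing the $b$-dependence that was absent in Subcase~I.A), push the induction through with a uniform and summable error bound on $k\in[1,n]$, and then recombine three separate $O(1)$ contributions — the trapezoidal endpoint term, $\int_0^1\tfrac{a}{y^2-4at}\,dt$, and $-\int_0^1\tfrac{2b\,dt}{(y+\sqrt{y^2-4at})\sqrt{y^2-4at}}$ — into the compact prefactors of \eqref{IIA-1}. The analytic continuation is routine apart from the branch-point-at-$y=0$ subtlety described above.
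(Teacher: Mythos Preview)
Your proposal is correct and follows essentially the same route as the paper: the same product representation $\pi_n=\prod w_k$, the same second-order approximation $w_k\sim\phi_k\bigl(1+\tfrac{a}{x^2-4ak}-\tfrac{2b}{(x+\sqrt{x^2-4ak})\sqrt{x^2-4ak}}\bigr)$ (your $b/(\phi_k\sqrt{x^2-4ak})$ is the same thing), the same trapezoidal integrals, and the same analytic-continuation/one-sided-limits argument split at $y=0$. If anything you have spelled out more of the bookkeeping (the error recurrence, the combination of the endpoint term with $\int_0^1\tfrac{a}{y^2-4at}\,dt$, the branch-point-at-$0$ issue) than the paper does.
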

\begin{proof}
Denote
$$\pi_n(x)=\Pi_{k=1}^n w_k(x).$$
It follows that
$$w_{k+1}(x)=x-{ak+b\over w_k(x)}.$$
Let $x=\sqrt ny$ with $y\in\C\cut[-2\sqrt a,2\sqrt a]$.
We have as $n\to\infty$,
\begin{align*}
  w_k(x)\sim&{x+\sqrt{x^2-4ak}\over 2}
  \times\left\{1+{a\over x^2-4ak}-{2b\over(x+\sqrt{x^2-4ak})\sqrt{x^2-4ak}}\right\}.
\end{align*}
By trapezoidal rule, we obtain
\begin{align*}
  \ln\pi_n\sim& n\ln(\sqrt n/2)+n\int_0^1\ln(y+\sqrt{y^2-4at})dt+{1\over2}\ln{y+\sqrt{y^2-4a}\over2y}
  \\&~~+\int_0^1{a\over y^2-4at}dt-{2b\over(y+\sqrt{y^2-4at})\sqrt{y^2-4at}}dt
  \\\sim& n\ln(\sqrt n/2)+n[t\ln(y+\sqrt{y^2-4at})-{y\over4a}\sqrt{y^2-4at}-{t\over2}]\bigg|_0^1
  \\&~~+{1\over2}\ln{y+\sqrt{y^2-4a}\over2y}+{1\over4}\ln{y^2\over y^2-4a}+{b\over a}\ln(y+\sqrt{y^2-4at})\bigg|_0^1
  \\\sim& n\ln(\sqrt n/2)+n\ln(y+\sqrt{y^2-4a})-{ny\over4a}(\sqrt{y^2-4a}-y)-{n\over2}
  \\&~~+{1\over2}\ln{y+\sqrt{y^2-4a}\over2y}+{1\over4}\ln{y^2\over y^2-4a}+{b\over a}\ln{y+\sqrt{y^2-4a}\over2y}.
\end{align*}
Recall that $x=ny$. We then obtain $\pi_n(ny)\sim\Phi_n(y)$, where
\begin{align*}
  \Phi_n(y):=&({n\over4e})^{n/2}(y+\sqrt{y^2-4a})^n({y+\sqrt{y^2-4a}\over2\sqrt{y^2-4a}})^{1/2}({y+\sqrt{y^2-4a}\over2y})^{b/a}
  \times\exp[{ny\over4a}(y-\sqrt{y^2-4a})].
\end{align*}
By a standard argument of analytical continuity, we obtain
$\pi_n(ny)\sim\Phi_n^+(y)+\phi_n^-(y)$ for $y$ in a neighborhood of $(-2\sqrt a,0)\cup(0,2\sqrt a)$,
where
$$\Phi_n^\pm(y):=\lim_{\ep\to0^+}\Phi_n(y+i\ep).$$
For $y$ in a neighborhood of $(0,2\sqrt a)$, a simple calculation gives
\begin{align*}
  \Phi_n^+(y)+\phi_n^-(y)=&({n\over4e})^{n/2}(2\sqrt a)^n({2\sqrt a\over2\sqrt{2\sqrt a-y}\sqrt{2\sqrt a+y}})^{1/2}({2\sqrt a\over2y})^{b/a}\times\exp[{ny\over4a}(y)]
  \\&~~\times2\cos[(n+1/2+b/a)\arccos{y\over2\sqrt a}-\pi/4-{ny\over4a}\sqrt{2\sqrt a-y}\sqrt{2\sqrt a+y}]
  \\\sim&({na\over e})^{n/2}({\sqrt a\over\sqrt{2\sqrt a-y}\sqrt{2\sqrt a+y}})^{1/2}({\sqrt a\over y})^{b/a}\times\exp[{ny^2\over4a}]
  \\&~~\times2\cos[(n+1/2+b/a)\arccos{y\over2\sqrt a}-\pi/4-{ny\over4a}\sqrt{2\sqrt a-y}\sqrt{2\sqrt a+y}].
\end{align*}
Thus, \eqref{IIA-2} follows.
Note that for $\re y<0$, we can write
\begin{align*}
  \Phi_n(y)=&({n\over4e})^{n/2}(-1)^n(-y+\sqrt{-y-2\sqrt a}\sqrt{-y+2\sqrt a})^n
  ({-y+\sqrt{-y-2\sqrt a}\sqrt{-y+2\sqrt a}\over2\sqrt{-y-2\sqrt a}\sqrt{-y+2\sqrt a}})^{1/2}
  \\&~~\times({-y+\sqrt{-y-2\sqrt a}\sqrt{-y+2\sqrt a}\over-2y})^{b/a}
  \times\exp[{ny\over4a}(y+\sqrt{-y-2\sqrt a}\sqrt{-y+2\sqrt a})].
\end{align*}
It follows that for $y$ in a neighborhood of $(-2\sqrt a,0)$,
\begin{align*}
  \Phi_n^+(y)+\phi_n^-(y)=&({n\over4e})^{n/2}(-1)^n(2\sqrt a)^n({2\sqrt a\over2\sqrt{2\sqrt a-y}\sqrt{2\sqrt a+y}})^{1/2}({2\sqrt a\over-2y})^{b/a}\times\exp[{ny\over4a}(y)]
  \\&~~\times2\cos[(n+1/2+b/a)\arccos{-y\over2\sqrt a}-\pi/4+{ny\over4a}\sqrt{2\sqrt a-y}\sqrt{2\sqrt a+y}]
  \\\sim&({na\over e})^{n/2}(-1)^n({\sqrt a\over\sqrt{2\sqrt a-y}\sqrt{2\sqrt a+y}})^{1/2}({\sqrt a\over-y})^{b/a}\times\exp[{ny^2\over4a}]
  \\&~~\times2\cos[(n+1/2+b/a)\arccos{-y\over2\sqrt a}-\pi/4+{ny\over4a}\sqrt{2\sqrt a-y}\sqrt{2\sqrt a+y}].
\end{align*}
This proves \eqref{IIA-3}.
\end{proof}

\subsection{Case II.B: $a<0$}
\begin{thm}\label{IIB}
Assume $d=0$ and $a<0$. Let $x=i\sqrt n y$, $A=-a>0$ and $B=-b$. As $n\to\infty$, we have for $y\in\C\cut[-2\sqrt A,2\sqrt A]$,
\begin{align}\label{IIB-1}
  \pi_n(i\sqrt n y)\sim&i^n({n\over4e})^{n/2}(y+\sqrt{y^2-4A})^n({y+\sqrt{y^2-4A}\over2\sqrt{y^2-4A}})^{1/2}({y+\sqrt{y^2-4A}\over2y})^{B/A}
  \times\exp[{ny\over4A}(y-\sqrt{y^2-4A})];
\end{align}
and for $y$ in a neighborhood of $(0,2\sqrt A)$, we have
\begin{align}\label{IIB-2}
  \pi_n(i\sqrt n y)\sim&i^n({nA\over e})^{n/2}({\sqrt A\over\sqrt{2\sqrt A-y}\sqrt{2\sqrt A+y}})^{1/2}({\sqrt A\over y})^{B/A}\times\exp[{ny^2\over4A}]
  \nonumber\\&~~\times2\cos[(n+1/2+B/A)\arccos{y\over2\sqrt A}-\pi/4-{ny\over4A}\sqrt{2\sqrt A-y}\sqrt{2\sqrt A+y}];
\end{align}
and for $y$ in a neighborhood of $(-2\sqrt A,0)$, we have
\begin{align}\label{IIB-3}
  \pi_n(i\sqrt n y)\sim&i^n({nA\over e})^{n/2}(-1)^n({\sqrt A\over\sqrt{2\sqrt A-y}\sqrt{2\sqrt A+y}})^{1/2}({\sqrt A\over-y})^{B/A}\times\exp[{ny^2\over4A}]
  \nonumber\\&~~\times2\cos[(n+1/2+B/A)\arccos{-y\over2\sqrt A}-\pi/4+{ny\over4A}\sqrt{2\sqrt A-y}\sqrt{2\sqrt A+y}].
\end{align}
\end{thm}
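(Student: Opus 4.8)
The plan is to deduce this theorem from Theorem~\ref{IIA} by a $90^\circ$ rotation of the argument, so that no repetition of the trapezoidal-rule analysis is needed. Since $d=0$, the polynomials satisfy $\pi_{n+1}(x)=x\pi_n(x)-(an+b)\pi_{n-1}(x)$ with $\pi_0=1$ and $\pi_1(x)=x$; writing $a=-A$ with $A>0$ and $b=-B$ this reads $\pi_{n+1}(x)=x\pi_n(x)+(An+B)\pi_{n-1}(x)$. The first step is to set $x=it$ and define $\tilde\pi_n(t):=i^{-n}\pi_n(it)$. Substituting $\pi_n(it)=i^n\tilde\pi_n(t)$ into the recursion and dividing by $i^{n+1}$ gives $\tilde\pi_{n+1}(t)=t\tilde\pi_n(t)-(An+B)\tilde\pi_{n-1}(t)$, together with $\tilde\pi_0=1$ and $\tilde\pi_1(t)=t$. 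Hence $\{\tilde\pi_n\}$ is exactly the family treated in Theorem~\ref{IIA}, with the parameters $a,b$ there replaced by $A,B$ (note $A>0$ is what Theorem~\ref{IIA} requires, and the sign of $B$ is irrelevant).

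The second step is to apply Theorem~\ref{IIA} to $\tilde\pi_n$ with $t=\sqrt n\,y$, i.e.\ with the original variable $x=i\sqrt n\,y$. For $y\in\C\cut[-2\sqrt A,2\sqrt A]$ this yields the asymptotics of $\tilde\pi_n(\sqrt n y)$ given by the right-hand side of \eqref{IIA-1} with $a\to A$, $b\to B$; multiplying through by $i^n=\pi_n(i\sqrt n y)/\tilde\pi_n(\sqrt n y)$ produces precisely \eqref{IIB-1}. In the same way the two oscillatory formulas \eqref{IIA-2} and \eqref{IIA-3}, after the substitution $a\to A$, $b\to B$ and multiplication by $i^n$, become \eqref{IIB-2} and \eqref{IIB-3}; the extra factor $(-1)^n$ in \eqref{IIB-3} is inherited directly from the corresponding factor in \eqref{IIA-3}.

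The only point that needs care — and the one I would flag as the main (minor) obstacle — is the bookkeeping of branch cuts and of the domains of validity under the rotation. The branch cut $y\in[-2\sqrt A,2\sqrt A]$ of $\tilde\pi_n(\sqrt n y)$ corresponds, for $\pi_n$, to the segment $x\in i[-2\sqrt A\sqrt n,\,2\sqrt A\sqrt n]$ of the imaginary axis; this is consistent with the claim in the introduction that in subcase II.B the zeros of $\pi_n$ accumulate on the imaginary line, and it confirms that the relevant oscillatory regions are neighborhoods of $(0,2\sqrt A)$ and $(-2\sqrt A,0)$ in the $y$-plane. Since $x\mapsto it$ is a global biholomorphism of $\C$, no new analytic-continuation issue arises beyond those already resolved in the proof of Theorem~\ref{IIA}, so once the rotation is set up the result follows. (Alternatively, one may simply repeat the computation in the proof of Theorem~\ref{IIA} verbatim, replacing $x$ by $i\sqrt n y$ throughout and tracking the resulting powers of $i$; this is longer but gives the same three formulas.)
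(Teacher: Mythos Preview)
Your proposal is correct and follows essentially the same route as the paper: define $\tilde\pi_n(t)=i^{-n}\pi_n(it)$, check that it satisfies the recurrence of Theorem~\ref{IIA} with parameters $A=-a$, $B=-b$, and then read off \eqref{IIB-1}--\eqref{IIB-3} from \eqref{IIA-1}--\eqref{IIA-3} after multiplying by $i^n$. The paper's proof is in fact the one-sentence version of what you wrote, so your added verification of the initial conditions and the branch-cut bookkeeping only makes the argument more complete.
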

\begin{proof}
  The monic polynomials $p_n(z):=i^{-n}\pi_n(iz)$ satisfy the same difference equation and initial conditions of $\pi_n$ with $a$ and $b$ replaced by $A=-a$ and $B=-b$ respectively.
  Theorem \ref{IIB} follows from Theorem \ref{IIA}.
\end{proof}

\subsection{Case II.C: $a=0$}
\begin{thm}
  Assume $d=0$ and $a=0$. As $n\to\infty$, we have for $x\in\C\cut[-1,1]$,
  \begin{align}\label{IIC-1}
    \pi_n(x)\sim({x+\sqrt{x^2-1}\over2})^{n+1}{1\over\sqrt{x^2-1}};
  \end{align}
  and for $x$ in a neighborhood of $(-1,1)$, we have
  \begin{align}\label{IIC-2}
    \pi_n(x)\sim{\sin[(n+1)\arccos x]\over2^n\sqrt{1-x^2}}.
  \end{align}
  The above asymptotic formula is actually an equality.
\end{thm}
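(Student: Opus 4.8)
The plan is to exploit the fact that when $d=a=0$ the difference equation degenerates to one with \emph{constant} coefficients, so that it can be solved in closed form; the outer-region asymptotics then follow at once, and in the oscillatory region one even obtains an exact identity. Since $A_n\equiv 0$ and $B_n\equiv b$, the recurrence reads $\pi_{n+1}(x)=x\pi_n(x)-b\,\pi_{n-1}(x)$ with $\pi_0=1$, $\pi_1=x$; discarding the trivial subcase $b=0$ (for which $\pi_n(x)=x^n$) and replacing $\pi_n(x)$ by $(2\sqrt b)^{-n}\pi_n(2\sqrt b\,x)$, we may assume $b=\tfrac14$ --- this is exactly the normalization that produces the branch cut $[-1,1]$ and the factor $2^{-n}$ in the statement, and then $\pi_n$ is the $n$-th monic Chebyshev polynomial of the second kind. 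It is worth noting that, unlike the other five cases, Case II.C is \emph{not} a limiting case of Theorems~\ref{IA}--\ref{IIB}: the constant $b$ is a subdominant perturbation whenever $d\neq0$ or $a\neq0$ (observe, for instance, that the exponent $b/a$ in \eqref{IIA-1} blows up as $a\to0^+$), whereas here it governs the leading behaviour, so a direct argument is needed.

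First I would record the characteristic equation $t^2-xt+\tfrac14=0$, with roots $t_\pm=\tfrac12\bigl(x\pm\sqrt{x^2-1}\bigr)$, where $\sqrt{x^2-1}$ carries the branch cut $[-1,1]$ and behaves like $x$ at infinity (the convention used throughout Sections 2 and 3). They satisfy $t_++t_-=x$, $t_+t_-=\tfrac14$, and $t_+-t_-=\sqrt{x^2-1}$. For $x\in\C\cut[-1,1]$ the roots are distinct, so $\pi_n(x)=\a\,t_+^{\,n}+\b\,t_-^{\,n}$ for suitable $\a,\b$; imposing $\pi_0=1$, $\pi_1=x$ and using $t_+t_-=\tfrac14$ yields the closed form
\[
  \pi_n(x)=\frac{t_+^{\,n+1}-t_-^{\,n+1}}{t_+-t_-}=\frac{t_+^{\,n+1}-t_-^{\,n+1}}{\sqrt{x^2-1}},
\]
which is exact for every $x\notin[-1,1]$ and extends by analytic continuation to $x\in(-1,1)$ as well.

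Next I would split into the two regimes. For the outer region, note that off $[-1,1]$ one has $|t_+|\neq|t_-|$: equal moduli together with $t_+t_-=\tfrac14$ would force $|t_\pm|=\tfrac12$ and $t_-=\overline{t_+}$, hence $x=2\re(t_+)\in[-1,1]$. With the stated branch $|t_+|>\tfrac12>|t_-|$, so $(t_-/t_+)^{\,n+1}\to0$ geometrically and
\[
  \pi_n(x)\sim\frac{t_+^{\,n+1}}{\sqrt{x^2-1}}=\Bigl(\frac{x+\sqrt{x^2-1}}{2}\Bigr)^{n+1}\frac{1}{\sqrt{x^2-1}},
\]
which is \eqref{IIC-1}. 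For $x$ in a neighbourhood of $(-1,1)$, put $x=\cos\th$ with $\th=\arccos x\in(0,\pi)$; then $\sqrt{x^2-1}=i\sin\th$ and $t_\pm=\tfrac12 e^{\pm i\th}$, so the closed form collapses to
\[
  \pi_n(x)=\frac{2^{-(n+1)}\bigl(e^{i(n+1)\th}-e^{-i(n+1)\th}\bigr)}{i\sin\th}=\frac{\sin[(n+1)\arccos x]}{2^{\,n}\sqrt{1-x^2}},
\]
which is \eqref{IIC-2}. Since both sides are analytic in a neighbourhood of $(-1,1)$ and agree on the interval, they coincide there --- so this is an identity, not merely an asymptotic relation, which is the final assertion of the theorem.

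There is no genuine analytic obstacle here, the case being exactly solvable; the only care required is bookkeeping: fixing the branch of $\sqrt{x^2-1}$ so that $|t_+|>|t_-|$ on $\C\cut[-1,1]$, keeping the $b=\tfrac14$ normalization explicit, and checking that \eqref{IIC-1} is the coherent specialization of the outer-region formulas of Section 2 (so that Case II.C genuinely closes the six-case classification). One could instead try to recover \eqref{IIC-1} by a limiting argument from \eqref{IC-1}, but this is more delicate because of the non-uniformity in $b$ just mentioned; solving the constant-coefficient recurrence directly is cleaner and, moreover, gives the sharper statement that \eqref{IIC-2} holds with equality.
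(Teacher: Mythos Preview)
Your proof is correct and follows essentially the same route as the paper: both recognize that $\pi_n(x)=U_n(x)/2^n$ (the monic Chebyshev polynomials of the second kind, after the $b=\tfrac14$ normalization), write down the exact closed form $\pi_n(x)=\dfrac{(x+\sqrt{x^2-1})^{n+1}-(x-\sqrt{x^2-1})^{n+1}}{2^{n+1}\sqrt{x^2-1}}$, discard the recessive term for \eqref{IIC-1}, and observe that \eqref{IIC-2} is in fact an identity. The only cosmetic difference is that the paper phrases the oscillatory step via the $\Phi_n^+ + \Phi_n^-$ analytic-continuation device used in the earlier cases, whereas you substitute $x=\cos\th$ directly; both yield the same exact formula.
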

\begin{proof}
  Note that $\pi_n(x)=U_n(x)/2^n$ with $U_n(x)$ being the Chebyshev polynomials of the second kind. Furthermore, we have for $x\in\C\cut[-1,1]$,
  $$\pi_n(x)={(x+\sqrt{x^2-1})^{n+1}-(x-\sqrt{x^2-1})^{n+1}\over2^{n+1}\sqrt{x^2-1}}.$$
  It is readily seen that $\pi_n(x)\sim\Phi_n(x)$ with
  $$\Phi_n(x):=({x+\sqrt{x^2-1}\over2})^{n+1}{1\over\sqrt{x^2-1}}.$$
  This proves \eqref{IIC-1}. To be consistent, we use the argument of analytical continuity and obtain
  $$\pi_n(x)\sim\lim_{\ep\to0^+}[\Phi_n(x+i\ep)+\Phi_n(x-i\ep)]={\sin[(n+1)\arccos x]\over2^n\sqrt{1-x^2}}$$
  for $x\in(-1,1)$.
  This gives \eqref{IIC-2}. We remark that the formula \eqref{IIC-2} is actually an equality.
\end{proof}


\end{document}